
\documentclass{amsart}
\usepackage{amsmath}
\usepackage{amsfonts}
\usepackage{amssymb}
\usepackage{graphicx}
\usepackage{epstopdf}
\usepackage{url}
\usepackage[section]{placeins}

\begin{document}
\title{A SYNTHETIC ALGEBRAIC APPROACH TO A DISCUSSION OF RIEMANN'S HYPOTHESIS}

\author{Michele Fanelli}
\email[Michele Fanelli]{michele31fanelli@gmail.com}
\address [Michele Fanelli]{Via L.B. Alberti 5, 20149 Milano, Italy}
\author{Alberto Fanelli}
\email[Alberto Fanelli]{apuntoeffe@gmail.com}
\address[Alberto Fanelli] {Via L.B. Alberti 12, 20149 Milano, Italy}
\date{Dic 16, 2013}
\keywords{Riemann hypothesis, Gram, Backlund, Zeta function, extension, zeroes, number theory, 11M26 }
\newtheorem{defn}{Definition}
\newtheorem{cor}{Corollary}
\newtheorem{theorem}{Theorem}

%
%
\begin{abstract}
A fresh approach to the long debated question is proposed, starting from the GRAM-BACKLUND analytical continuation of the Zeta function (G-B Zeta expression). Consideration is given to the symmetric (even-exponent) and anti-symmetric (odd exponent) components of the power series representation of the G-B formulation along a circular path containing the 4 hypothetical zero-points predicted by HADAMARD and DE LA VALL\'EE POUSSIN (H/DLVP ‘outlying’ quartet). From the necessary conditions required of the even- and odd-exponent components at a representative zero-point of the hypothetical quartet some interesting logical consequences are derived and briefly discussed in the framework of the RIEMANN Hypothesis. A temporary mapping of the representative zero-point of the hypothetical quartet onto a higher-dimensional auxiliary domain provides an intriguing short-cut to the negative conclusion about the possibility of existence of such outlying zero-points.
\end{abstract}

\maketitle
\tableofcontents

%
%
\section{The Zeta function. Definition of trivial and non-trivial, c.c. zeroes. Riemann's hypothesis and the scope of the present essay.}\label{sec:1}

As well known, the formal definition of the Zeta function is: 
\begin{equation}
Z(s)=\underset{n=1}{\overset{\infty }{\sum }}\frac{1}{n^s} \label{1}
\end{equation}
where $s=X+i.Y \in \mathbb{C}$, , $X \in \mathbb{R}$ , $Y \in \mathbb{R}$ , $Z \in \mathbb{C}$ , $n \in \mathbb{I}$ (natural integer). Ref. \cite{Edwa74} is a good source.

The problem of the existence and location of the zeroes of the Zeta function is known to have important implications in Number Theory. These zeroes fall into one or the other of two main categories: "trivial" and "non-trivial" zeroes.
Trivial zeroes, infinite in number, are located on the real axis at $s=-2,-4,-6... -2.n...$ and do not concern the object of the present essay. Non-trivial known zeroes are also infinitely many and are located on the ‘critical line’ (C.L.) $X=\frac{1}{2}$. These will be termed in the following "canonical zeroes". They occur in complex conjugated (c.c.) pairs $s=\frac{1}{2}\pm i.Y$ with $Y\neq 0$. It is not yet known whether there are non-trivial zeroes outside the C.L., but it is known, see Ref.\cite{Edwa74}, that all non-trivial zeroes are constrained to lie inside to the "critical strip" $0<X<1$.
The celebrated RIEMANN Hypothesis, formulated in 1859, see Refs.\cite{Edwa74},\cite{Riem59}, states that those on the C.L. are the only non-trivial zeroes of the Zeta function. In other words, no pair of c.c. zeroes can be found outside of the C.L. The proof of the possibility, or the impossibility, of existence of these "outlying" pairs has not been reached up to now, although the known zeroes along the C.L. have been determined (and the search for outlying zeroes has been pursued), by numerical computing, up to extremely large values of Y, so that it is ascertained that the outlying zeroes, if they exist, are to be found for exceedingly high values of Y , see Refs. \cite{Gour04} and \cite{Odly01}. A proof (or the disproving) of the Hypothesis would have important implications not only in Number theory, but also in practical applications of the properties of primes, such as  cryptography, see Ref. \cite{Rive77}.
The aim of the present essay is to propose a synthetic, algebraic approach to the discussion of RIEMANN’s Hypothesis, using only the tools of ordinary Complex Algebra and Calculus.

%
%
\section{Main properties of the Zeta function in general and of the Gram-Backlund extension in particular}\label{sec:2}

The Zeta function is defined by Eq. \eqref{1} only over the half-plane $X>1$. In order to investigate the RIEMANN Hypothesis, it is necessary to extend the formula \eqref{1} by analytic continuation. One such extension is the GRAM-BACKLUND extension, see Refs. \cite{Gram03} and \cite{Back18}, which can be written as: 	

\begin{equation}
Z_{GB}(s)=\underset{n=1}{\overset{N>1}{\sum }}\frac{1}{n^s}+\frac{N^{1-s}}{s-1}+\frac{N^{-s}}{2}+s.\underset{\mu =1}{\overset{\infty }{\sum }}\frac{B_{2\mu }}{(2.\mu )!}.(s+1)\ldots (s+2.\mu -2).N^{-s-2.\mu +1} \label{2}
\end{equation}
where the symbols $B_{2\mu }$ stand for the BERNOULLI numbers of even index.
The Zeta function, and its GRAM-BACKLUND extension, are analytic functions of $s$. As such, they enjoy the properties of analytic functions, to wit: $Z\left(\overline{s}\right)=\overline{Z}(s)$, possibility to develop $Z(s)$ as a power-series of $s$ , etc.

Functions $F_{GB}(s)$ and $Q(s)$ in $Z_{GB}(s)=F_{GB}(s)-Q(s)=0$ (see further on) are also analytical functions of  $s$. 
The basis of the following argument will be the GRAM-BACKLUND extension of the Zeta function. It will be important to observe that the zeroes of the Zeta Function are determined by the equation:
\begin{equation}
F_{GB}(s)=Q(s) \label{3}
\end{equation}

where $F_{GB}(s)$ and $Q(s)$ are defined as: 
\begin{equation}
\begin{split}
F_{GB}(s)&=\frac{N^{s-1}}{s}.\underset{n=1}{\overset{N>1}{\sum }}\frac{1}{N^s}+\frac{1}{2.N.s}+\underset{\mu =1}{\overset{\infty }{\sum }}\frac{B_{2\mu }}{N^{2.\mu }.(2.\mu )!}.(s+1)\ldots (s+2.\mu -2) \\
Q(s)&=\frac{1}{s.(1-s)}
\end{split} \label{4}
\end{equation}

It can be easily verified that \eqref{3} with definitions \eqref{4} are equivalent to $Z_{GB}(s)=0$ for $s\neq0$.
The symmetry properties of $F_{GB}(s)$ and of $Q(s)$ with respect to the C.L. will be seen to play a capital role in the following discussion. It is apparent, in this connection, that $Q(s)$ is highly symmetrical, see Figures ~\ref{fig:1}, ~\ref{fig:2} and ~\ref{fig:3}:
\begin{equation}
\begin{split}
Q(1-s)&=Q(s) \\
\overline{Q}(1-s)&=Q\left(\overline{s}\right)=\overline{Q}(s) 
\end{split} \label{5}
\end{equation}

while $F_{GB}(s)$ is the sum of several terms with widely different symmetries and anti-symmetries; therefore:	

\begin{equation}
F_{GB}(s)=F_{GB}^S(s)+F_{GB}^{AS}(s) \label{6}
\end{equation}

where symmetry and anti-symmetry with respect to the C.L. are defined as follows:

\begin{defn}
Symmetry
\begin{equation}
\begin{split}
F_{GB}^S(1-s)=F_{GB}^S(s) &\text{ and } \overline{F}_{GB}^S(1-s)=F_{GB}^S\left(\overline{s}\right)=\overline{F}_{GB}^S(s) \\
Q(1-s)=Q(s) &\text{ and } \overline{Q}(1-s)=Q\left(\overline{s}\right)=\overline{Q}(s) 
\end{split} \label{7}
\end{equation}
\end{defn}

\begin{defn}
Anti-symmetry
\begin{equation}
F_{GB}^{AS}(1-s)=-F_{GB}^{AS}(s) \text{ and } \overline{F}_{GB}^{AS}(1-s)=-F_{GB}^{AS}\left(\overline{s}\right)=-\overline{F}_{GB}^{AS}(s) \label{8}
\end{equation}
\end{defn}

%
%

\section{The hypothetical outlying zeroes: the H/DLVP theorem}\label{sec:3}

HADAMARD and DE LA VALL\'EE POUSSIN showed , see Refs. \cite{Hada96} and \cite{Dela96}, that the outlying zeroes, if they exist, must occur in duplicate form, i.e. as two c.c. pairs located at mirror-images of each other with respect to the C.L., see Figure ~\ref{fig:4}. In the following any such hypothetical foursome of outlying zero-points will be called a ‘H/DLVP quartet’, and the member of the quartet with $X-\frac{1}{2}>0$ , $Y>0$ will be taken as representative of the whole quartet: indeed, if 
\begin{equation}
s=\frac{1}{2}+\rho.e^{i.\alpha } \label{9}
\end{equation}

it comes 
\begin{defn}
\begin{equation}
\begin{split}
s&=X+i.Y \\
X-\frac{1}{2}&=\varepsilon =\rho.\cos \alpha \\
Y&=\eta=\rho.\sin \alpha 
\end{split}\label{10}
\end{equation}
with the constraints:
\begin{equation}
\begin{split}
0&<\cos \alpha <\frac{1}{2.\rho} \\
\sqrt{1-\frac{1}{4.\rho ^2}}&<\sin \alpha <1 
\end{split}\label{11}
\end{equation}
\end{defn}

It is evident, see Figure ~\ref{fig:4}, that the four members of the hypothetical quartet lie on a circle of radius $\rho$ and center located at $s=\frac{1}{2}$ , called "circle $\Gamma$" in the following. At the generic point on the circle $\Gamma$ it is, with $0\leq \vartheta \leq 2.\pi$:	
\begin{equation}
s=\frac{1}{2}+\rho.e^{i.\vartheta } \label{12}
\end{equation}

The member representative of the hypothetical quartet lies at azimuth $\vartheta=\alpha$ on that circle, with:
\begin{equation}
arcos\frac{1}{2.\rho }<\alpha <\frac{\pi }{2} \label{13}
\end{equation}

%
%
\section{Development of functions $Q(s)$ and $F_{GB}(s)$ in series of powers of $\rho.e^{i.\vartheta }$ }\label{sec:4}

The functions $F_{GB}(s)$ and $Q(s)$ are analytic functions of the variable $s$. Along the circle $\Gamma$ it is:
\begin{equation}
s=\frac{1}{2}+\rho.e^{i.\vartheta } \label{14}
\end{equation}

And defining a new variable $s'$:
\begin{equation}
s'=s-\frac{1}{2}=\rho.e^{i.\vartheta } \label{15}
\end{equation}
which is an analytic function of $s$, along the circle $\Gamma$ the functions $F_{GB}(s)$ and $Q(s)$ become analytic functions of the variable $s'$: 
\begin{equation}
F_{GB}(s')=F_{GB}\left(\rho.e^{i.\vartheta }\right) \text{ and } Q(s')=Q\left(\rho.e^{i.\vartheta }\right) \label{16}
\end{equation}
which are continuous and periodical over $\Gamma$:
\begin{equation}
F_{GB}\left(\rho.e^{i.(\vartheta \pm 2.k.\pi )}\right)=F_{GB}\left(\rho.e^{i.\vartheta }\right) \text{ and } Q\left(e^{i.(\vartheta \pm 2.k.\pi )}\right)=Q\left(\rho.e^{i.\vartheta }\right) \label{17}
\end{equation}
with $k\in \mathbb{I}$ (natural integer, zero included). 

It is therefore legitimate to develop these two functions as power series of $\rho.e^{i.\vartheta }$
\begin{equation}
F_{GB}\left(\rho.e^{i.\vartheta }\right)=F_{GB}^S\left(\rho.e^{i.\vartheta }\right)+F_{GB}^{AS}\left(\rho.e^{i.\vartheta }\right) \label{18}
\end{equation}
see Eqs. \eqref{6}, \eqref{7} , where the symmetric and anti-symmetric components of $F_{GB}\left(\rho.e^{i.\vartheta }\right)$ are respectively:
\begin{equation}
\begin{split}
F_{GB}^S\left(\rho.e^{i.\vartheta }\right)&=\underset{m=-\infty }{\overset{+\infty }{\sum }}C_{2.m}.\rho ^{2.m}.e^{i.(2.m.\vartheta )} \\
F_{GB}^{AS}\left(\rho.e^{i.\vartheta }\right)&=\underset{m=-\infty }{\overset{+\infty }{\sum }}C_{2.m+1}.\rho ^{2.m+1}.e^{i.(2.m+1).\vartheta } 
\end{split}\label{19}
\end{equation}

where $m\in \mathbb{I}$ (natural integer, zero included), and the coefficients $C_{2.m}$ , $C_{2.m+1}$ are obtained by integration over the circumference of circle $\Gamma$:	
\begin{equation}
\begin{split}
C_{2.m}&=\frac{1}{2.\pi }.\int _0^{2.\pi }F_{GB}\left(\rho.e^{i.\vartheta }\right).\rho ^{-2.m}.e^{-i.(2.m.\vartheta )}.d\vartheta \\
C_{2.m+1}&=\frac{1}{2.\pi }.\int _0^{2.\pi }F_{GB}\left(\rho.e^{i.\vartheta }\right).\rho ^{-2.m+1}.e^{-i.(2.m+1).\vartheta }.d\vartheta 
\end{split}\label{20}
\end{equation}
Note that it is:	
\begin{equation}
\begin{split}
\int _0^{2.\pi }F_{GB}\left(\rho.e^{i.\vartheta }\right).\rho ^{-2.m}.e^{-i.(2.m.\vartheta )}.d\vartheta &=\int _0^{2.\pi }F_{GB}^S\left(\rho.e^{i.\vartheta }\right).\rho ^{-2.m}.e^{-i.(2.m.\vartheta )}.d\vartheta \\
\int _0^{2.\pi }F_{GB}\left(\rho.e^{i.\vartheta }\right).\rho ^{-2.m+1}.e^{-i.(2.m+1).\vartheta }.d\vartheta &=\int _0^{2.\pi }F_{GB}^{AS}\left(\rho.e^{i.\vartheta }\right).\rho ^{-2.m+1}.e^{-i.(2.m+1).\vartheta }.d\vartheta
\end{split}\label{21}
\end{equation}
because: 
\begin{equation}
\begin{split}
\int _0^{2.\pi }F_{GB}^{AS}\left(\rho.e^{i.\vartheta }\right).\rho ^{-2.m}.e^{-i.(2.m.\vartheta )}.d\vartheta =0 \\
\int _0^{2.\pi }F_{GB}^S\left(\rho.e^{i.\vartheta }\right).\rho ^{-2.m+1}.e^{-i.(2.m+1).\vartheta }.d\vartheta =0 
\end{split}\label{22}
\end{equation}
so that:
\begin{cor}
the symmetric component of $F_{GB}\left(\rho.e^{i.\vartheta }\right)$ is expressed as the sum of powers of $\rho.e^{i.\vartheta }$ with even exponents , see Eqs. \eqref{7} 
\end{cor}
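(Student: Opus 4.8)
The plan is to unwind the definitions and show that the claimed identity is essentially a restatement of the symmetry hypothesis (\ref{7}) combined with the orthogonality relations (\ref{22}) that have just been established. First I would write out the full Laurent/power-series expansion of $F_{GB}\left(\rho.e^{i.\vartheta}\right)$ on $\Gamma$, collecting terms according to the parity of the exponent of $s'=\rho.e^{i.\vartheta}$; by the decomposition (\ref{18}) this splits the series into an even-exponent part and an odd-exponent part. The content of the corollary is then the assertion that the even-exponent part \emph{is} the symmetric component $F_{GB}^S$ and carries no odd-exponent contribution.

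The key step is to translate the symmetry condition $F_{GB}^S(1-s)=F_{GB}^S(s)$ of (\ref{7}) into a condition on the variable $s'$. Since $s=\tfrac12+s'$, the reflection $s\mapsto 1-s$ across the critical line becomes $s'\mapsto -s'$, i.e. $\rho.e^{i.\vartheta}\mapsto -\rho.e^{i.\vartheta}=\rho.e^{i.(\vartheta+\pi)}$. Thus symmetry with respect to the C.L. is exactly invariance under $s'\mapsto -s'$. A power $ (s')^{k}$ satisfies $(-s')^{k}=(s')^{k}$ precisely when $k$ is even, and $(-s')^{k}=-(s')^{k}$ when $k$ is odd. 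Therefore a function that is invariant under $s'\mapsto-s'$ can only involve even powers of $s'$, and conversely any sum of even powers of $s'$ is so invariant; this is the heart of the argument. Combined with (\ref{22}), which shows that the odd-exponent integrals pick out nothing from $F_{GB}^S$ and the even-exponent integrals pick out nothing from $F_{GB}^{AS}$, we conclude that the coefficients $C_{2.m}$ are precisely the coefficients of $F_{GB}^S$ and that $F_{GB}^S\left(\rho.e^{i.\vartheta}\right)=\sum_{m}C_{2.m}.\rho^{2.m}.e^{i.(2.m.\vartheta)}$, which is the first line of (\ref{19}) and is exactly the statement of the corollary.

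To finish I would note the elementary fact that $e^{i.k.(\vartheta+\pi)}=(-1)^{k}e^{i.k.\vartheta}$, so that on the circle $\Gamma$ the map $\vartheta\mapsto\vartheta+\pi$ realizes the reflection $s'\mapsto-s'$ term by term in the Fourier/power expansion; invariance of $F_{GB}^S$ under this map then forces every coefficient attached to an odd $k$ to vanish. The only subtlety worth spelling out is the interchange of the reflection operation with the (possibly doubly-infinite) summation, which is justified by the uniform convergence of the series on $\Gamma$ asserted in Section~\ref{sec:4} via the continuity and periodicity statements (\ref{16})--(\ref{17}).

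The main obstacle, such as it is, is not analytic but bookkeeping: one must be careful that the decomposition (\ref{18}) into symmetric and anti-symmetric parts is \emph{the same} decomposition as the even/odd-exponent split of the power series, rather than merely compatible with it. Establishing this requires invoking the uniqueness of the power-series (Laurent) coefficients $C_k$ together with the second conjugation identity in (\ref{7}) to rule out any ``mixed'' behaviour; once uniqueness is in hand, matching the two decompositions is automatic and the corollary follows.
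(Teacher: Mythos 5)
Your proposal is correct and follows essentially the same route as the paper: the reflection $s\mapsto 1-s$ becomes $s'\mapsto -s'$, i.e. $\vartheta\mapsto\vartheta+\pi$ on $\Gamma$, and the parity of $e^{i.k.\vartheta}$ under this shift kills the odd-exponent coefficients of the symmetric component, which is exactly the content of Eqs. \eqref{20}--\eqref{22} that the paper uses. You in fact supply the justification for the orthogonality relations \eqref{22} that the paper merely asserts, which is a welcome addition but not a different method.
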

and
\begin{cor}
the anti-symmetric component of $F_{GB}\left(\rho.e^{i.\vartheta }\right)$ is expressed as the sum of powers of $\rho.e^{i.\vartheta }$ with odd exponents , see Eq. \eqref{8} 
\end{cor}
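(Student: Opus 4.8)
The plan is to derive the stated corollary from the orthogonality relations \eqref{22}, whose verification is the real content; the companion corollary about even exponents follows by the mirror-image argument. First I would isolate the one geometric fact that drives everything: along the circle $\Gamma$ the reflection $s\mapsto 1-s$ about the critical line becomes, in the shifted variable $s'=s-\tfrac12=\rho.e^{i.\vartheta}$, simply $s'\mapsto-s'$, that is $\rho.e^{i.\vartheta}\mapsto\rho.e^{i.(\vartheta+\pi)}$, so $\vartheta\mapsto\vartheta+\pi$. Hence the defining relations \eqref{7} and \eqref{8} read, along $\Gamma$, $F_{GB}^S\!\left(\rho.e^{i.(\vartheta+\pi)}\right)=F_{GB}^S\!\left(\rho.e^{i.\vartheta}\right)$ and $F_{GB}^{AS}\!\left(\rho.e^{i.(\vartheta+\pi)}\right)=-F_{GB}^{AS}\!\left(\rho.e^{i.\vartheta}\right)$; equivalently, $F_{GB}^S$ is an even and $F_{GB}^{AS}$ an odd function of $s'$.

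Next I would establish \eqref{22} by the substitution $\vartheta=\varphi+\pi$, using $2.\pi$-periodicity of the integrands to keep the range $[0,2.\pi]$. In the second integral of \eqref{22} the kernel picks up $e^{-i.(2.m+1).(\varphi+\pi)}=-e^{-i.(2.m+1).\varphi}$, while the evenness of $F_{GB}^S$ in $s'$ leaves $F_{GB}^S$ unchanged; the integral therefore equals its own negative and so vanishes. The first integral of \eqref{22} is handled identically: here $e^{-i.(2.m).(\varphi+\pi)}=e^{-i.(2.m).\varphi}$ since $e^{-2.m.i.\pi}=1$, whereas the oddness of $F_{GB}^{AS}$ supplies the factor $-1$, so once more the integral equals minus itself and vanishes. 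This simultaneously yields \eqref{21}.

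Finally, substituting $F_{GB}=F_{GB}^S+F_{GB}^{AS}$ into the coefficient formulas \eqref{20} and invoking \eqref{22}, each odd coefficient $C_{2.m+1}$ is produced solely by $F_{GB}^{AS}$ and each even coefficient $C_{2.m}$ solely by $F_{GB}^S$. Consequently the power (Fourier) series of $F_{GB}^{AS}$ restricted to $\Gamma$ contains no even-exponent term and reduces exactly to $\sum_{m}C_{2.m+1}.\rho^{2.m+1}.e^{i.(2.m+1).\vartheta}$, which is precisely the claim. The parallel computation — now with the roles of even and odd interchanged — gives the preceding corollary for $F_{GB}^S$.

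The only delicate point, and the main potential obstacle, is the legitimacy of the power-series representation on $\Gamma$ itself: one must know that $F_{GB}$ (hence $F_{GB}^S$ and $F_{GB}^{AS}$) is analytic in an annulus straddling $\Gamma$, with no pole lying on the circle. The poles of $F_{GB}$ and $Q$ nearest the critical line sit at $s=0,1$, i.e. at $s'=\mp\tfrac12$; since the hypothetical H/DLVP quartet occurs at very large height $\eta$, the radius satisfies $\rho>\tfrac12$, so those poles lie strictly inside $\Gamma$ and the restriction to $\Gamma$ is a genuine continuous, $2.\pi$-periodic function of $\vartheta$ admitting the expansions \eqref{18}–\eqref{19}. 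Granting this — as the excerpt does when it declares $F_{GB}$ and $Q$ analytic functions of $s$ — the remaining steps are the elementary substitution and coefficient matching described above.
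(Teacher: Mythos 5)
Your proof is correct and follows essentially the same route as the paper: decompose $F_{GB}$ via \eqref{6}--\eqref{8}, observe that along $\Gamma$ the symmetry/anti-symmetry under $s\mapsto 1-s$ becomes evenness/oddness under $s'\mapsto -s'$, i.e.\ $\vartheta\mapsto\vartheta+\pi$, and conclude from \eqref{20}--\eqref{22} that only odd-exponent terms survive in the expansion of $F_{GB}^{AS}$. You actually supply more than the paper does, since the paper merely asserts the orthogonality relations \eqref{22} whereas you prove them by the substitution $\vartheta=\varphi+\pi$, and you also justify the legitimacy of the Laurent--Fourier expansion on $\Gamma$ by locating the singularities at $s'=\mp\tfrac{1}{2}$ strictly inside the circle (since $\rho>\tfrac{1}{2}$).
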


Analogously, for the function $Q(s)=\frac{1}{s.(1-s)}$:
\begin{equation}
Q\left(\rho.e^{i.\vartheta }\right)=\underset{m=-\infty }{\overset{+\infty }{\sum }}D_{2.m}.\rho ^{2.m}.e^{i.(2.m.\vartheta )} \label{23}
\end{equation}
where: 
\begin{equation}
D_{2.m}=\frac{1}{2.\pi }.\int _0^{2.\pi }Q\left(\rho.e^{i.\vartheta }\right).\rho ^{-2.m}.e^{-i.(2.m.\vartheta )}.d\vartheta \label{24}
\end{equation}
since $Q\left(\rho.e^{i.\vartheta }\right)$ is symmetrical with respect to the C.L. in the above-defined sense, see Eqs. \eqref{5} to \eqref{8}\footnote{From the definitions of "symmetric anti-symmetric functions", see Eqs. \eqref{7}, \eqref{8} and from their respective properties, see Eqs. \eqref{20} to \eqref{23}, as well as from the definitions of coefficients $C_{2.m}$, $C_{2.m+1}$ ,$D_{2.m}$ , see again Eqs. \eqref{20} to \eqref{23}, it can be easily proved that all these coefficients are real quantities functions of $\rho$ and $m$.}.

%
%
\section{Expression of the null-conditions for $Z_{GB}\left(\rho.e^{i.\vartheta }\right)$ in the hypothetical H/DLVP zero-point at $\vartheta =\alpha$ }\label{sec:5}

Assume now that a H/DLVP quartet of outlying zeroes has been found at radius $\rho$ and azimuth $\vartheta = \alpha$: the null-conditions of $Z_{GB}\left(\rho.e^{i.\alpha }\right)$ are to be expressed as follows:		
\begin{equation}
F_{GB}\left(\rho.e^{i.\alpha }\right)=F_{GB}^{AS}\left(\rho.e^{i.\alpha }\right)+F_{GB}^S\left(\rho.e^{i.\alpha }\right)=Q\left(\rho.e^{i.\alpha }\right) \label{25}
\end{equation}
i.e. , after Eqs. \eqref{19}, \eqref{23}: 
\begin{equation}
\begin{split}
&\underset{m=-\infty }{\overset{+\infty }{\sum }}C_{2.m+1}.\rho ^{2.m+1}.e^{i.(2.m+1)\alpha }+\underset{m=-\infty }{\overset{+\infty }{\sum }}C_{2.m}.\rho ^{2.m}.e^{i.(2.m.\alpha )}= \\
&\underset{m=-\infty }{\overset{+\infty }{\sum }}D_{2.m}.\rho ^{2.m}.e^{i.(2.m.\alpha )} 
\end{split} \label{26}
\end{equation}
In order to fulfill this condition, it must be: 
\begin{equation}
F_{GB}^{AS}\left(\rho.e^{i.\alpha }\right)=0 \text{ and } F_{GB}^S\left(\rho.e^{i.\alpha }\right)=Q\left(\rho.e^{i.\alpha }\right) \label{27}
\end{equation}
i.e. Eq. \eqref{25} must hold separately for the anti-symmetric and symmetric components of $Z_{GB}\left(\rho.e^{i.\alpha }\right)$, because otherwise, as a consequence of Definitions 1 and 2 , see Eqs. \eqref{7} and \eqref{8}, in each of the two pairs of c.c. zero-points (each pair consisting of two points mirroring each other across the C.L.), the zero condition, if fulfilled at one of them (say the point to the left of the C.L.) would be violated at its mirror-image (say the point to the right of the C.L.). In terms of the power-series representation of the functions involved in Eq. \eqref{26}, Eq. \eqref{27} therefore yields the two conditions: 	
\begin{equation}
\begin{split}
\underset{m=-\infty }{\overset{+\infty }{\sum }}C_{2.m+1}.\rho ^{2.m+1}.e^{i.(2.m+1)\alpha }&=0 \\
\underset{m=-\infty }{\overset{+\infty }{\sum }}C_{2.m}.\rho ^{2.m}.e^{i.(2.m.\alpha )}&=\underset{m=-\infty }{\overset{+\infty }{\sum }}D_{2.m}.\rho ^{2.m}.e^{i.(2.m.\alpha )} 
\end{split} \label{28}
\end{equation}
The first of the null-conditions \eqref{28} is conveniently broken down into its real and imaginary parts: 
\begin{equation}
\begin{split}
\underset{m=-\infty }{\overset{+\infty }{\sum }}C_{2.m+1}.\rho ^{2.m+1}.\cos [(2.m+1).\alpha ]&=0 \\
\underset{m=-\infty }{\overset{+\infty }{\sum }}C_{2.m+1}.\rho ^{2.m+1}.\sin [(2.m+1).\alpha ]&=0 
\end{split} \label{29}
\end{equation}
These are two necessary (but by no means sufficient) conditions to be fulfilled at the representative point of the assumed H/DLVP quartet. In the next section, the implications of Eqs. \eqref{28}, \eqref{29} will be worked out.

%
%
\section{Implications of the null-conditions for the anti-symmetric component of $Z_{GB}\left(\rho.e^{i.\alpha }\right)$}\label{sec:6}

In order to discuss this point, it is necessary to prove an important theorem. In exposing, and tentatively proving, such a theorem the sum of terms with odd positive exponents will be treated separately. Analogous treatment holds, \textit{mutatis mutandis}, for the sum of anti-symmetric powers with odd negative exponents.

\begin{theorem}
The null-condition for the anti-symmetric component of $Z_{GB}$ cannot be fulfilled if $\alpha$, $\rho.cos\alpha$, $\rho.sin\alpha$ are constrained to be real variables.
\end{theorem}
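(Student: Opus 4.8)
The plan is to argue by contradiction. Suppose a H/DLVP quartet exists at radius $\rho$ and azimuth $\alpha$, with $\rho$, $\alpha$, and hence $\rho\cos\alpha$ and $\rho\sin\alpha$, all real and subject to the constraints \eqref{11}, \eqref{13}. Then the anti-symmetric null-condition \eqref{27} must hold, equivalently the pair of real equations \eqref{29}. First I would collapse \eqref{29} back into the single complex statement
\[
\sum_{m=-\infty}^{+\infty} C_{2m+1}\,(\rho\,e^{i\alpha})^{2m+1}=0,
\]
that is, $F_{GB}^{AS}(\rho e^{i\alpha})=0$, and observe that the two equations \eqref{29} are exactly its real and imaginary parts. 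Since every coefficient $C_{2m+1}$ is real (see the footnote in Section~\ref{sec:4}), the value of this series at $\rho e^{-i\alpha}$ is the complex conjugate of its value at $\rho e^{i\alpha}$; combined with the oddness of the exponents, this reproduces the quartet symmetry and justifies working only with the representative point.

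The heart of the argument is then to show that, for real $\rho$ and real $\alpha$, the two trigonometric series in \eqref{29} cannot both vanish at a single pair $(\rho,\alpha)$ lying in the admissible arc \eqref{13}. I would treat the block of terms with odd positive exponents, $\sum_{m\ge 0} C_{2m+1}\rho^{2m+1}e^{i(2m+1)\alpha}$ (the block with odd negative exponents being handled identically, with $\rho$ replaced by $1/\rho$), factor out $e^{i\alpha}$ so that the bracketed factor becomes a genuine power series in $w=\rho^{2}e^{2i\alpha}$ with real coefficients, and then exploit the observation that, once $\rho$ is regarded as real, the map $\alpha\mapsto F_{GB}^{AS}(\rho e^{i\alpha})$ is a single $\mathbb{C}$-valued real-analytic function of the one real parameter $\alpha$, whereas \eqref{29} demands the simultaneous vanishing of two independent real quantities. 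Converting this mismatch between two real conditions and effectively one real degree of freedom into the claimed impossibility is the target of the proof.

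The step I expect to be the main obstacle is precisely the passage from the degree-of-freedom count to a rigorous statement that no solution exists: a naive count yields only generic non-solvability, so the actual structure of the coefficients $C_{2m+1}$ — which carry the full content of the Zeta function — must be used to rule out accidental coincidences of the two series over $\arccos\frac{1}{2\rho}<\alpha<\frac{\pi}{2}$. I anticipate that this will be achieved either through a sign or monotonicity property of one of the two series in \eqref{29} along that arc, or, as foreshadowed in the abstract, through a temporary embedding of the representative zero-point into a higher-dimensional auxiliary domain, in which any putative solution is forced to carry a non-real value of $\alpha$ — contradicting the hypothesis that $\alpha$, $\rho\cos\alpha$ and $\rho\sin\alpha$ are real, and thereby closing the argument.
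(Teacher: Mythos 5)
Your setup — argue by contradiction, reduce to the pair of real equations \eqref{29}, use the reality of the coefficients $C_{2m+1}$ to justify working at the single representative point, and split off the block of odd positive exponents from the odd negative ones — matches the paper's framing exactly. But the decisive step is absent from your proposal, and you say so yourself: you observe that a count of two real conditions against one real degree of freedom yields only \emph{generic} non-solvability, and then you defer the actual exclusion of solutions to one of two unexecuted mechanisms (a sign/monotonicity property of the series in \eqref{29}, or an embedding that forces $\alpha$ to be non-real). Neither is carried out, so as written the proposal establishes nothing beyond the restatement of \eqref{29}. This is a genuine gap, not a presentational one: the entire content of the theorem lives in precisely the step you postpone.

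For comparison, the paper's own route is your second anticipated mechanism. It rewrites each power $\rho^{t}e^{i t\alpha}$ as $\rho^{t}\left(\frac{\pm 1+i\tan\alpha}{\sqrt{1+\tan^{2}\alpha}}\right)^{t}$, recasts \eqref{29} as the complex equations \eqref{31}--\eqref{33} in the single unknown $\tan\alpha$, and then imposes a consistency requirement ($\tan(t\alpha)=\tan((t+1)\alpha)$ for every positive integer $t$) whose only finite solutions are $\tan\alpha=\mp i$; a separate two-solution comparison (its conditions A and B) is used to argue that no two distinct admissible real values of $\tan\alpha$ can both satisfy the null-condition. Since $\tan\alpha=\mp i$ is incompatible with $\alpha$, $\rho\cos\alpha$, $\rho\sin\alpha$ all real (the surviving real option $\tan\alpha=\pm\infty$ being the critical line itself), the theorem is read off. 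You should be aware that this pivotal step in the paper — demanding $\tan(t\alpha)=\tan((t+1)\alpha)$ for all $t$ as a condition for the powers to be ``univocally determined'' — is asserted rather than derived, so reproducing the paper's argument would not by itself repair your gap; but if your goal is fidelity to the paper, the missing content of your proof is exactly this reduction of \eqref{29} to an algebraic constraint on $\tan\alpha$ alone and the demonstration that its only solutions are non-real. Your first anticipated mechanism (a sign or monotonicity argument over the arc $\arccos\frac{1}{2\rho}<\alpha<\frac{\pi}{2}$) appears nowhere in the paper and would require information about the $C_{2m+1}$ that neither you nor the paper establishes.
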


\begin{proof}
Assuming that an H/DLVP quartet exists, consider the relevant $\Gamma$-circle containing the four zero-points. Along the circle the radius $\rho$ is a constant which is assumed to be known. One of the conditions to be fulfilled at the representative point of the quartet will be sorted out to determine the azimuth of this point, i.e. $\alpha$.

Let us write \eqref{29} in complex algebra notation: recalling that the null-condition must hold both in the representative point of the H/DLVP quartet and in its mirror-image with respect to the C.L., it is:
\begin{equation}
\begin{split}
\rho.(\pm \cos \alpha +i.\sin \alpha )&=\varepsilon +i.\eta \\
\cos \alpha &=\frac{1}{\sqrt{1+\tan ^2\alpha }}=\frac{1}{\sqrt{1+i.\tan \alpha }}.\frac{1}{\sqrt{1-i.\tan \alpha }}\\
\sin \alpha &=\frac{\tan \alpha }{\sqrt{1+\tan ^2\alpha }}\\
\varepsilon +i.\eta=\rho .(\pm \cos \alpha +i.\sin \alpha )&=\rho .\frac{\pm 1+i.\tan \alpha }{\sqrt{1+\tan ^2\alpha }}=\left\{
\begin{array}{c}
 \rho .\sqrt{\frac{1+i.\tan \alpha }{1-i.\tan \alpha }} \\
 \text{or else:}\\
-\rho .\sqrt{\frac{1-i.\tan \alpha }{1+i.\tan \alpha }}
\end{array}
\right.
\end{split} \label{30}
\end{equation}

(the last written formula,  $\varepsilon +i.\eta =-\rho .\sqrt{\frac{1-i.\tan \alpha }{1+i.\tan \alpha }}$, is particularly important because it derives from the H/DLVP condition of specularity of the outlying zeroes). But also, with $t=2.m$ or $t=2.m+1$: 

\begin{equation*}
\rho ^t.e^{i.t.\alpha }=\rho ^t.[\cos (t.\alpha )+i.\sin (t.\alpha )]=\rho ^t.\frac{1+i.\tan (t.\alpha )}{\sqrt{1+\tan ^2(t.\alpha )}}=\rho ^t.\left(\frac{\pm 1+i.\tan \alpha }{\sqrt{1+\tan ^2\alpha }}\right)^t 
\end{equation*}

so that it is immediate to put Eqs. \eqref{29}  under one or the other of the following complex forms : 
\begin{equation} \label{31}
-\rho .\sqrt{\frac{1-i.\tan \alpha }{1+i.\tan \alpha }}.\underset{m=0}{\overset{\infty }{\sum }}C_{2.m+1}.\rho ^{2.m}.\left(\frac{1-i.\tan \alpha }{1+i.\tan \alpha }\right)^m=0  
\end{equation}

\begin{equation} \label{32}
\rho .\sqrt{\frac{1+i.\tan \alpha }{1-i.\tan \alpha }}.\underset{m=0}{\overset{\infty }{\sum }}C_{2.m+1}.\rho ^{2.m}.\left(\frac{1+i.\tan \alpha }{1-i.\tan \alpha }\right)^m=0
\end{equation}
both equivalent to:
\begin{equation}
\underset{m=0}{\overset{\infty }{\sum }}C_{2.m+1}.\rho ^{2.m+1}.\frac{1+i.\tan [(2.m+1).\alpha ]}{\sqrt{1+\tan ^2[(2.m+1).\alpha ]}}=0  \label{33}
\end{equation}
The condition that  $\tan \alpha $  and  $\tan (t.\alpha)$,  with  $t=\text{any arbitrary positive integer}$, be univocally determined is:  $\tan (t.\alpha )=\tan [(t+1).\alpha]$  , i.e. $\frac{\tan (t.\alpha )+\tan \alpha }{1-\tan (t.\alpha ).\tan \alpha }=\tan (t.\alpha )$ from which  $\tan (t.\alpha )=\tan \alpha =\mp i$ , so that in order to comply with  Eqs. \eqref{31}, \eqref{32}  keeping all their terms real, there are two possible (complex) finite options for  $\tan \alpha$, besides the (infinite) real option leading to a canonical zero : 	

\begin{equation}
\begin{split}
(\tan \alpha &=\pm \infty )\\
\tan \alpha &=\mp i 
\end{split} \label{34}
\end{equation}

If it were acceptable to give $\varepsilon$ a complex value, the option  $\tan \alpha = -i$ would give:
\begin{equation}
\varepsilon =\frac{\eta }{\tan \alpha }=i.\eta   \label{35}
\end{equation}

which yields the entirely acceptable solution  $\varepsilon +i.\eta =2.i.\eta =i.Y$, leading again to a canonical zero, while the option $\tan \alpha = +i$  would give:
\begin{equation}
\varepsilon =-i.\eta    \label{36}
\end{equation}

which is to be rejected because it gives   $\varepsilon +i.\eta =0$,  i.e. the point  $s=\frac{1}{2}$  where   $Z_{GB}\neq 0$.

Moreover, if  $\tan \alpha = -i$, it is: 
\begin{equation}
\tan (\alpha )=\tan (2.\alpha )=\tan (3.\alpha )=\ldots =\tan (2.m.\alpha )=\tan \{(2.m+1).\alpha \}=-i    \label{37}
\end{equation}
so that Eq.\eqref{31} is satisfied while the option expressed by  Eq.\eqref{32} is to be discarded.	

But is the solution $\tan \alpha =\mp i$  unique?
Assume that  besides  $\alpha =\alpha _1$ there is on circle   $\Gamma$  another solution  $\alpha =\alpha _2\neq \alpha _1$. Then it should be :
\begin{equation*}
\underset{m=0}{\overset{\infty }{\sum }}C_{2.m+1}.\rho ^{2.m}.\left[\left(\sqrt{\frac{1-i.\tan \alpha _1}{1+i.\tan \alpha _1}}\right){}^{2.m}-\left(\sqrt{\frac{1-i.\tan \alpha _2}{1+i.\tan \alpha _2}}\right){}^{2.m}\right]=0
\end{equation*}
from which either (condition A):
\begin{equation*}
\left(\sqrt{\frac{1-i.\tan \alpha _1}{1+i.\tan \alpha _1}}\right){}^m-\left(\sqrt{\frac{1-i.\tan \alpha _2}{1+i.\tan \alpha _2}}\right){}^m=0
\end{equation*}
which yields  $\tan \alpha _2=\tan \alpha _1$, or (condition B):
\begin{equation*}
\left(\sqrt{\frac{1-i.\tan \alpha _1}{1+i.\tan \alpha _1}}\right){}^m+\left(\sqrt{\frac{1-i.\tan \alpha _2}{1+i.\tan \alpha _2}}\right){}^m=0
\end{equation*}
which yields  $\tan \alpha _2=-\frac{1}{\tan \alpha _1}$   if  $m=\text{odd integer}$  but $\tan \alpha _2=\tan \alpha _1$  if  $m=\text{even integer}$. This dependence of  $\tan \alpha _2$  from the parity of exponent  $m$  is an inconsistency; besides, $\tan \alpha _2=-\frac{1}{\tan \alpha _1}$ would be inconsistent also with condition A. Thus a second outlying zero cannot exist unless $\tan \alpha _1=-\frac{1}{\tan \alpha _1}$, from which  $\tan ^2\alpha _1=-1$ ,  $\tan \alpha _1=\tan \alpha _2=\pm i$ , which means that the assumed second "outlying zero" cannot be distinct from the first one. The latter, on the other hand, is not distinct from a "canonical" zero .  The necessary conditions A and B  are therefore satisfied by $\tan \alpha _1=\tan \alpha _2=\mp i$, but not by distinct real-valued solutions for  $\tan \alpha _1$  and $\tan \alpha _2$ respecting the other constraints of Eq.\eqref{11} on $\tan \alpha$.

The necessary conclusion of this analysis seems to be that outlying zeroes cannot exist. 
\end{proof}

Of course, to actually determine the value of $\rho=Y$ at the canonical zero one would have to take into account also the two remaining equations pertaining to the real and the imaginary components of $Z_{GB}^S$ and of $Q$, see Section \ref{sec:7}.
It can be shown that making $\alpha =i.\beta $ with $\beta $ real and developing formally the above results one would get an internally consistent canonical solution of $Z_{GB}^S=0$.

It is interesting per se to observe that the equation $\varepsilon =\pm i.\eta $ means that the assumed zero-point must lie on one of the two "isotropic lines" issuing from $s=\frac{1}{2}$; besides, this circumstance is also susceptible of an interesting interpretation.

Let us operate a change of Cartesian reference axes with origin at $s=\frac{1}{2}$. This change will consist in a counterclockwise rotation of the reference axes through an angle $\gamma$. Then the new coordinates $\varepsilon ',\eta '$ of our assumed H/DLVP zero will become: 
\begin{equation}
\begin{split}
\varepsilon '&=\varepsilon.\cos \gamma +\eta.\sin \gamma \\
\eta '&=-\varepsilon.\sin \gamma +\eta.\cos \gamma 
\end{split} \label{38}
\end{equation}
and the zero-point located at $\varepsilon =\rho.\cos \alpha =i.\eta$ , $\eta =\rho.\sin \alpha $ with respect to the old axes, will take the new coordinates:
\begin{equation}
\begin{split}
\varepsilon '&=\rho.(i.\sin \alpha.\cos \gamma +\sin \alpha.sin\gamma) \\
\eta '&=-\rho.(i.\sin \alpha.\sin \gamma -\sin \alpha.cos\gamma) \\
\varepsilon '&=i.\eta ' 
\end{split} \label{39}
\end{equation}
The null-condition for the anti-symmetric component of $Z_{GB}^{AS}$, i.e. for the sum of the terms with odd-exponents of $\rho.e^{i.\vartheta '}$, is obtained imposing:
\begin{equation}
\begin{split}
\rho.\cos \alpha '.\Omega '_c-\rho.\sin \alpha '.\Omega '_s&=0 \\
\rho.\sin \alpha '.\Omega '_c+\rho.\cos \alpha '.\Omega '_s&=0
\end{split} \label{40}
\end{equation}
where $\alpha '=\alpha -\gamma$ and $\Omega '_c=\cos \gamma.\Omega _c-\sin \gamma.\Omega _s$, $\Omega '_s=\sin \gamma.\Omega _c+\cos \gamma.\Omega _s$, $\Omega _c$ and $\Omega _s$ given by \footnote{The accented variables, which are defined in function of the new independent variable $\rho.e^{i.\vartheta '}$ as the non-accented variables were defined in function of the old independent variable $\rho.e^{i\vartheta }$, are obtained by making in the old definitions $\alpha =\alpha '+\gamma $ , $\vartheta =\vartheta '+\gamma$ and applying the usual trigonometric transformations. Details are here omitted for brevity.}:
\begin{equation}
\begin{split}
\Omega _c=\underset{m=0}{\overset{+\infty }{\sum }}C_{2.m+1}.\rho ^{2.m}.\cos (2.m.\alpha )&=0 \\
\Omega _s=\underset{m=0}{\overset{+\infty }{\sum }}C_{2.m+1}.\rho ^{2.m}.\sin (2.m.\alpha )&=0
\end{split} \label{41}
\end{equation}
And again, putting the above in complex form as in Eqs. \eqref{31}, \eqref{32}, \eqref{33} it should be:
\begin{equation}
\left(\varepsilon '-i.\eta '\right).\left(\varepsilon '+i.\eta '\right)=0 \label{42}
\end{equation}
and of the two possible options the one to be retained should be: 
\begin{equation}
\begin{split}
\tan \alpha '&=-i \\
\varepsilon '&=i.\eta '
\end{split} \label{43}
\end{equation}
exactly as in the old system of reference; the same implications therefore again apply.
The third of Eq. \eqref{39} and Eq. \eqref{42} mean, indeed, that the assumed H/DLVP zero-point lies on one or the other of the two isotropic lines issuing from the origin of the new system of reference, as eq. \eqref{34} mean that it lies on one of the two isotropic lines issuing from the origin of the old system of reference. Thus it enjoys the characteristic property of the isotropic lines, which are the fixed lines of the rotation group. Apparently one would get also $\rho =\sqrt{\left(\varepsilon '-i.\eta '\right).\left(\varepsilon '+i.\eta '\right)}=0$ , which is another peculiar feature of the isotropic lines (property of null length).
All of the foregoing considerations seem to be consistent indications that the only c.c. pairs of zero-points of $Z_{GB}$ are on the C.L. (canonical zeroes). Outlying zero-points such as those foreseen by H/DLVP are, apparently, not allowed as a consequence of the symmetry properties of the term $Q(s)=\frac{1}{s.(1-s)}$ in the R. H. S. of the second of Eq. \eqref{4}.

%
%
\section{Remarks on the methodology of approach and on the results obtained thus far}\label{sec:7}

The necessary conditions applied so far in order to obtain a non-trivial outlying zero are the following ones: 
-in the real domain:
\begin{equation}
\begin{split}
\rho.\cos \alpha.\underset{m=-\infty }{\overset{+\infty }{\sum }}C_{2.m+1}.\rho ^{2.m}.\cos (2.m.\alpha )&-\rho.\sin \alpha.\underset{m=-\infty }{\overset{+\infty }{\sum }}C_{2.m+1}.\rho ^{2.m}.\sin (2.m.\alpha )=0 \\
\rho.\sin \alpha.\underset{m=-\infty }{\overset{+\infty }{\sum }}C_{2.m+1}.\rho ^{2.m}.\cos (2.m.\alpha )&+\rho.\cos \alpha.\underset{m=-\infty }{\overset{+\infty }{\sum }}C_{2.m+1}.\rho ^{2.m}.\sin (2.m.\alpha )=0 \\
|\varepsilon =\rho.\cos \alpha|&<\frac{1}{2} \\
\eta =\rho.\sin \alpha &>0
\end{split} \label{44}
\end{equation}
-upon observing that these conditions may lead to inconsistencies in the real domain, it was attempted to allow $\alpha$ to be a complex quantity:
\begin{equation}
\alpha =i.\beta \label{45}
\end{equation}
where $\beta$ is real. More in general, it would be possible to assume that $\varepsilon$ and/or $\eta$ be allowed to take on complex values, but this would lead to the same end results at the price of more involved elaborations. On the other hand, it is convenient to assume that $\eta$ is real and positive, since the hypothetical H/DLVP zero-point should be above the $X$ axis. Therefore the choice was made to assume $\varepsilon =K.\eta$, which does not introduce any new constraints, and let $K$ be determined by the condition that the system \eqref{44} of two equations be not constrained to have null solutions. $K$ was allowed to be a complex quantity so that the system \eqref{44} might have a supplementary degree of freedom. 
In this way it was found that it must be: 
\begin{equation}
\begin{split}
K&=\pm i \\
\varepsilon &=i.\eta \\
\varepsilon +i.\eta &=2.i.\eta =i.Y
\end{split} \label{46}
\end{equation}
i.e. the hypothetical H/DLVP zero-point must coincide with a canonical zero.
The move to the complex domain for $\varepsilon$, thus, appears as a temporary short-cut because in the end, reverting to the real domain, the solution found is equivalent to:
\begin{equation}
\begin{split}
\varepsilon &= 0 \\
\eta &= Y
\end{split} \label{47}
\end{equation}
The Authors were intrigued to observe that this solution implies that $\varepsilon +i.\eta $ belongs to one of the two isotropic lines issuing from $s=\frac{1}{2}$. This fact, indeed, was found to be consistent with a rotation of the coordinate system around the point $s=\frac{1}{2}$, which leaves the new complex quantity $\varepsilon '+i.\eta '$ unchanged with respect with the old value $\varepsilon +i.\eta $; therefore the value of $Q'(\alpha ')=\frac{1}{\left(\frac{1}{2}+\varepsilon '+i.\eta '\right).\left(\frac{1}{2}-\varepsilon '-i.\eta '\right)}$ remains the same as $Q(\alpha )=\frac{1}{\left(\frac{1}{2}+\varepsilon +i.\eta \right).\left(\frac{1}{2}-\varepsilon -i.\eta \right)}$ and likewise the value of $Z_{GB}^S\left(\frac{1}{2}+\varepsilon +i.\eta \right)=Q(\alpha )$ remains the same as $Z_{GB}^S\left(\frac{1}{2}+\varepsilon '+i.\eta '\right)=Q'(\alpha ')$.
This unexpected relevance of a concept of Projective Geometry -the isotropic lines- to the analysis of the RIEMANN Hypothesis is deemed by the Authors to be worthy of note and possibly deserving further investigations.

%
%
\section{The remaining conditions for the determination of actual solutions of $Z_{GB}=0$}\label{sec:8}

The tentative result that all the non-trivial zeroes of $Z_{GB}$ lie along the C.L. was reached in the preceding sections by considering the conditions to be fulfilled by the anti-symmetric components of $Z_{GB}$. To complete the treatment, it remains now to see how the full conditions are expressed in our methodology of approach.

\begin{description}
\item[A]The angle must be $\alpha =\frac{\pi }{2}$. Therefore $\cos \alpha =0$ and $\sin \alpha =1$.
\item[B]As a consequence of point A, the first of Eq. \eqref{44} is identically fulfilled. The second of Eq. \eqref{44} becomes:
\begin{equation}
\underset{m=-\infty }{\overset{+\infty }{\sum }}C_{2.m+1}.\rho ^{2.m+1}.(-1)^m=\underset{m=-\infty }{\overset{+\infty }{\sum }}C_{2.m+1}.\rho ^{2.m+1}.\sin \left[(2.m+1).\frac{\pi }{2}\right]=0 \label{48}
\end{equation}
Since the coefficients $C_{2.m+1}$ are functions of $\rho$, Eq. \eqref{48} will be fulfilled at isolated points along the C.L.
\item[C]The equations involving the symmetric components of $Z_{GB}$ become:
\begin{subequations}
\begin{align}
\underset{m=-\infty }{\overset{+\infty }{\sum }}C_{2.m}.\rho ^{2.m}.\cos \left(2.m.\frac{\pi }{2}\right)=\frac{1}{\left(\frac{1}{2}+i.\rho \right).\left(\frac{1}{2}-i.\rho \right)}&=-\underset{m=0}{\overset{\infty }{\sum }}\frac{\rho ^{-2.(m+1)}.\cos \left[2.(m+1).\frac{\pi }{2}\right]}{4^m} \label{eq49a} \\
\underset{m=-\infty }{\overset{0}{\sum }}C_{2.m}.\rho ^{2.m}.\sin \left(2.m.\frac{\pi }{2}\right)&\equiv 0\label{eq49b}
\end{align}
\end{subequations}
Eq. \eqref{eq49b} is an identity. Since in Eq. \eqref{eq49a} the coefficients $C_{2.m}$ are functions of $\rho$, Eq. \eqref{eq49a} will be fulfilled at isolated points along the C.L.
\end{description}
The points of the C.L. in which both eqs. \eqref{48} and \eqref{eq49a} are fulfilled will be the canonical zeroes of $Z_{GB}$.

%
%
\section{Conclusions}\label{sec:6}
The analysis carried out of the algebraic structure of $Z_{GB}$, of the necessary conditions implied by $Z_{GB}(s)=F_{GB}(s)-Q(s)=0$ and of the symmetry properties possessed by $F_{GB}(s)$ and $Q(s)$ seem to point out that the application of such necessary conditions to functions endowed with those symmetry properties is incompatible with the existence of outlying zeroes. If this conclusion were confirmed, it would provide, in the Authors’ opinion, an interesting line of approach to a full, rigorous discussion of the RIEMANN Hypothesis.

In the course of the analysis, it became apparent that the incompatibilities between the null-conditions for $Z_{GB}(s)$ and its symmetry features could be formally overcome moving the domain of definition of the variables involved to a wider domain. However, by translating the result thus obtained into the conditions required by the real and imaginary components of the actual variables involved, the same limitation (i. e that the allowed non-trivial zeroes must lie on the C.L.) was confirmed.

The enlargement of the variable domain thus formally introduced was not, however, a useless detour, insofar as it allowed to highlight an unexpected connection between the properties of the Zeta function and the properties of the isotropic lines, i.e. with entities and concepts proper to Projective Geometry. It is deemed by the Authors of the present essay that this connection, only very sketchily outlined in their analysis, is worthy of deeper investigation.

It is also remarkable that symmetry considerations were sufficient to reveal important aspects relevant to the discussion of the RIEMANN Hypothesis. The power of the concepts inherent to the theory of the symmetry groups was thus once more underscored.

\pagebreak

\begin{figure}[!htb]
\centering
\includegraphics[scale=0.6]{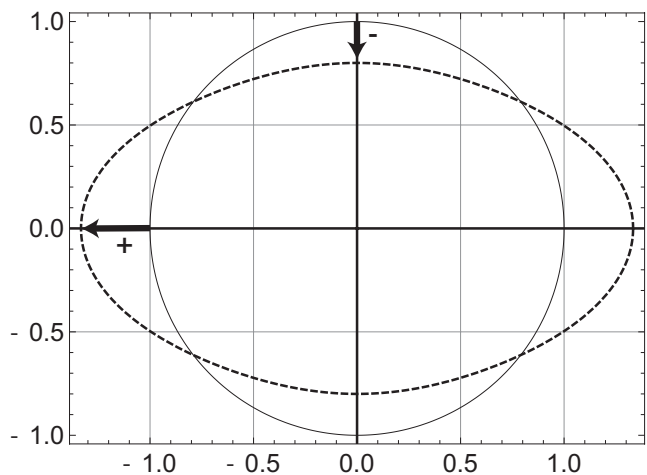}
\caption{Qualitative behavior of $Re\left[\frac{1/4}{s.(s-1)}\right]=Re\left[\underset{m=1}{\overset{\infty }{\sum }}\frac{1}{2^{2.m}}.\left(\frac{1}{\rho.e^{i.\vartheta }}\right)^{2.m}\right]$ over the circle $\Gamma$ (with radius $\rho$ reduced to a unitary value)}
\label{fig:1}
\end{figure}

\begin{figure}[!htb]
\centering
\includegraphics[scale=0.6]{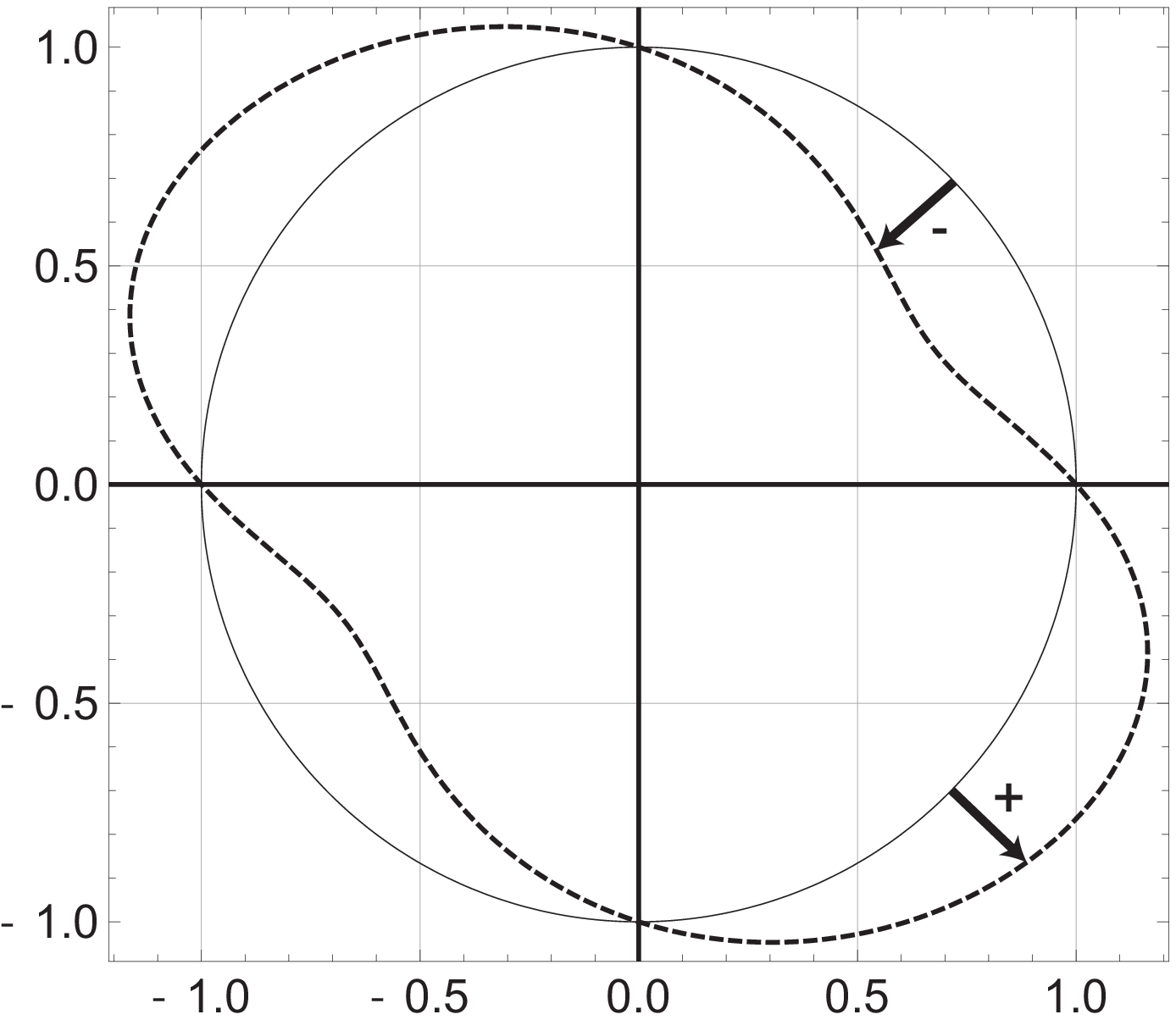}
\caption{Qualitative behavior of $Im\left[\frac{1/4}{s.(s-1)}\right]=Im\left[\underset{m=1}{\overset{\infty }{\sum }}\frac{1}{2^{2.m}}.\left(\frac{1}{\rho.e^{i.\vartheta }}\right)^{2.m}\right]$
over the circle $\Gamma$ (with radius $\rho$ reduced to a unitary value)}
\label{fig:2}
\end{figure}

\begin{figure}[!htb]
\centering
\includegraphics[scale=0.6]{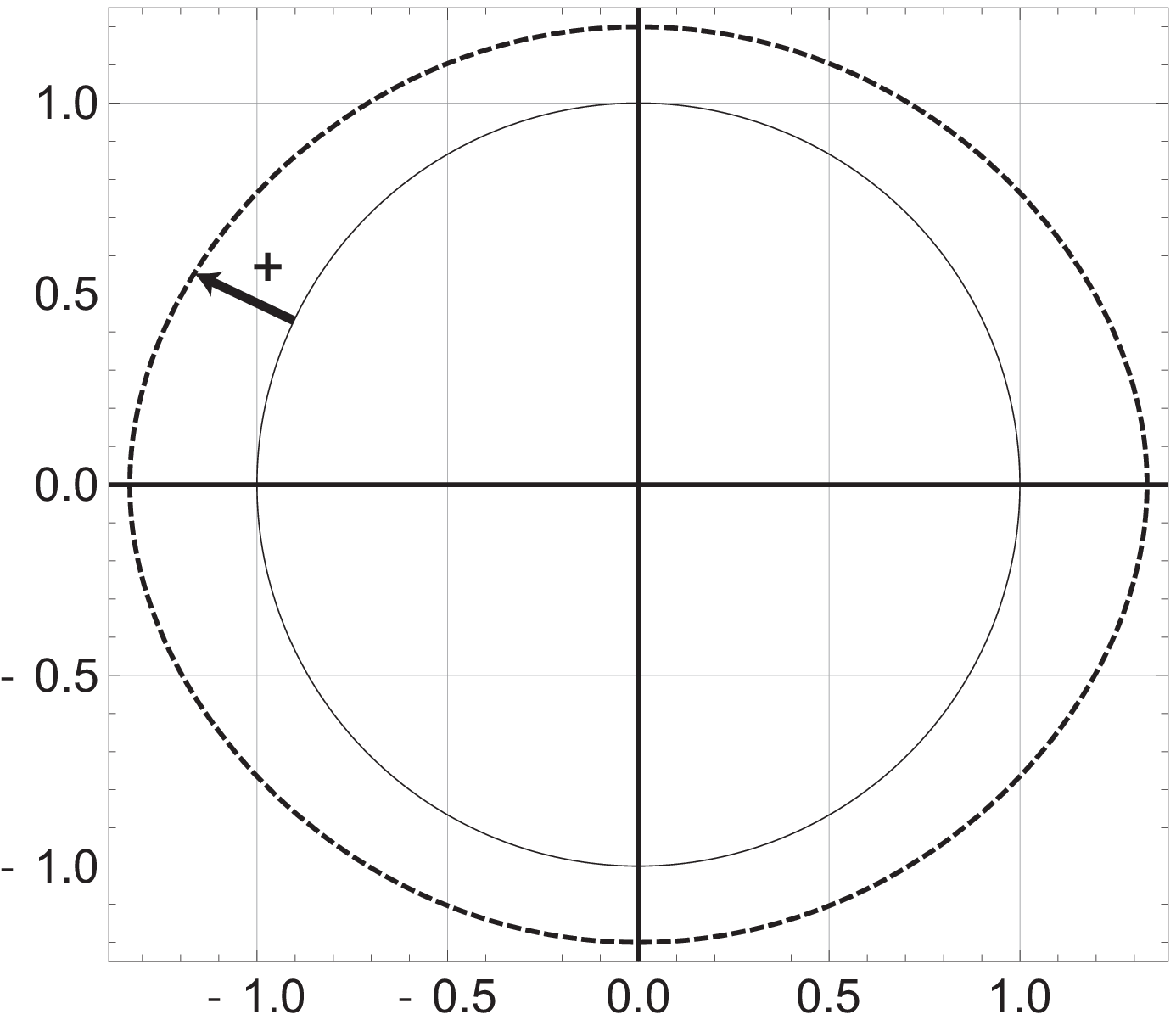}
\caption{Qualitative behavior of $\left|\underset{m=1}{\overset{\infty }{\sum }}\frac{1}{2^{2.m}}.\left(\frac{1}{\rho.e^{i.\vartheta }}\right)^{2.m}\right|$ over the circle $\Gamma$ (with radius $\rho$ reduced to a unitary value)}
\label{fig:3}
\end{figure}

\begin{figure}[!htb]
\centering
\includegraphics[scale=0.6]{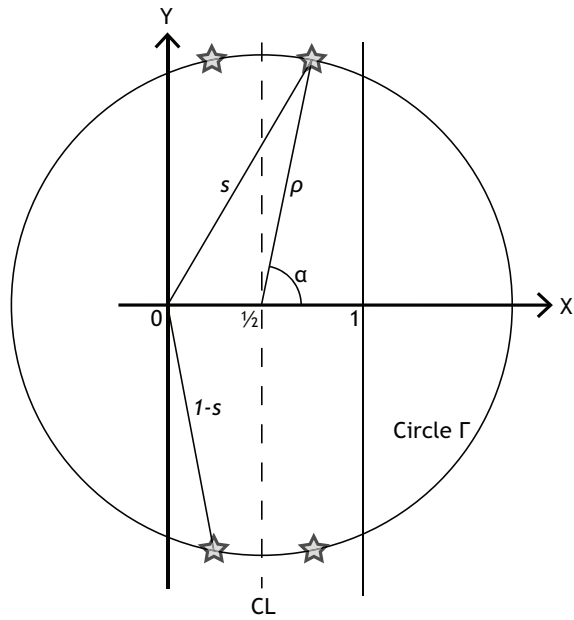}
\caption{The specularly symmetrical configuration of a H-DLVP quartet of hypothetical outlying zeroes of the Zeta function}
\label{fig:4}
\end{figure}

%
%

\end{document}